\newtheorem{theorem}{Theorem}[section]
\newtheorem{lemma}[theorem]{Lemma}
\newtheorem{corollary}[theorem]{Corollary}
\theoremstyle{definition}
\theoremstyle{remark}
\theoremstyle{remark}
\newtheorem{remark}[theorem]{Remark}
\def\({{\rm (}}
\def\){{\rm )}}
\let\Mathrm\operator@font
\let\Cal\mathcal
\let\Bbb\mathbb
\def\standop#1{\mathop{\Mathrm #1}\nolimits}
\def\difstop#1#2{\expandafter\def\csname #1\endcsname{\standop{#2}}}
\def\defstop#1{\difstop{#1}{#1}}
\def\GL{\text{\sl{GL}}}
\def\red{_{\Mathrm{red}}}
\def\SL{\text{\sl{SL}}}
\def\bmu{\boldsymbol{\mu}}
\def\O{\Cal O}
\def\sdarrow#1{\downarrow\hbox to 0pt{\scriptsize$#1$\hss}}
\def\suarrow#1{\uparrow\hbox to 0pt{\scriptsize$#1$\hss}}
\def\ssearrow#1{\searrow\hbox to 0pt{\scriptsize$#1$\hss}}
\def\section{\@startsection{section}{1}{\z@ }%
  {-3.5ex plus -1ex minus -.2ex}{2.3ex plus .2ex}{\bf }}
\long\def\refname{\par\kern -3ex
  \begin{center}\rm R\sc{eferences}\end{center}\par\kern 
  -2ex}
\def\@seccntformat#1{\csname the#1\endcsname.\quad}
\def\@@@sect#1#2#3#4#5#6[#7]#8{%
  \ifnum #2>\c@secnumdepth 
  \def \@svsec {}\else \refstepcounter {#1}%
  \def\@svsec{}
  \fi 
  \@tempskipa #5\relax 
  \ifdim \@tempskipa >\z@ 
  \begingroup #6\relax \@hangfrom {\hskip #3\relax 
    \@svsec}{\interlinepenalty \@M #8\par }\endgroup 
  \csname #1mark\endcsname {#7}
  \else 
  \def \@svsechd {#6\hskip #3\@svsec #8\csname #1mark\endcsname {#7}}
  \fi \@xsect {#5}}
\def\@@@startsection#1#2#3#4#5#6{%
  \if@noskipsec \leavevmode \fi \par \@tempskipa #4\relax \@afterindenttrue 
  \ifdim \@tempskipa <\z@ \@tempskipa -\@tempskipa \@afterindentfalse 
  \fi \if@nobreak \everypar {}\else \addpenalty {\@secpenalty }\addvspace 
  {\@tempskipa }\fi \@ifstar {\@ssect {#3}{#4}{#5}{#6}}{\@dblarg 
    {\@@@sect {#1}{#2}{#3}{#4}{#5}{#6}}}}
\def\theparagraph{\thesection.\arabic{paragraph}}
\def\aparagraph{\@@@startsection{paragraph}{2}{\z@ }%
  {1.75ex plus .2ex minus .15ex}{-1em}{\bf(\theparagraph) } }
\def\paragraph{\@@@startsection{paragraph}{2}{\z@ }%
  {1.75ex plus .2ex minus .15ex}{-1em}{}{\bf(\theparagraph)} }
\let\c@theorem\c@paragraph
\title{Classification of the linearly reductive finite subgroup schemes of
$\SL_2$}
\author{M{\sc itsuyasu} H{\sc ashimoto}}
\date{\normalsize
Department of Mathematics, Okayama University\\
Okayama 700--8530, JAPAN\\
E-mail: {\small \tt mh@okayama-u.ac.jp}\\
~\\
Dedicated to Professor Ngo Viet Trung\\
on the occasion of his sixtieth birthday
}
\begin{document}

\maketitle
\footnote[0]
{2010 \textit{Mathematics Subject Classification}. 
  Primary 14L15; Secondary 13A50.
  Key Words and Phrases.
  group scheme, Kleinian singularity, invariant theory, 
}

\begin{abstract}
We classify the linearly reductive finite subgroup schemes $G$ 
of $\SL_2=\SL(V)$ 
over an algebraically closed field $k$ of positive characteristic,
up to conjugation.
As a corollary, we prove that such $G$ is in one-to-one
correspondence with an isomorphism class of 
two-dimensional $F$-rational Gorenstein
complete local rings with the coefficient field $k$
by the correspondence $G\mapsto ((\Sym V)^G)\,\widehat{~}$.
\end{abstract}

\section{Introduction}
The classification of the finite subgroups of $\SL_2(\Bbb C)$ is well-known
(\cite[Section~26]{Dornhoff}, 
\cite[(6.2)]{LW}, see Theorem~\ref{dornhoff.thm}), 
and such a group corresponds to a Dynkin diagram
of type A, D, or E.
A two-dimensional singularity is Gorenstein and rational if and only if
it is a quotient singularity by a finite subgroup of $\SL_2(\Bbb C)$,
and such singularities (also called Kleinian singularities) are classified
via these subgroups, see \cite{Durfee}.
Indeed, a two-dimensional singularity is Gorenstein and rational
if and only if it is a quotient singularity by a finite subgroup of $\SL_2$.

It is known that the $F$-rationality is the characteristic $p$ version of
the rational singularity.
More precisely, a finite-type algebra over a field of characteristic zero
has rational singularities if and only if its modulo $p$ reduction is
$F$-rational for almost all prime numbers $p$ \cite{Smith}, \cite{Hara}.
The two-dimensional complete local $F$-rational Gorenstein rings 
over an algebraically closed field $k$ of characteristic $p>0$ is
classified using Dynkin diagrams A, D, and E, based on
Artin's classification of rational double points 
\cite{Artin}, see \cite{WY}, \cite{HL}.
Then we might well ask whether such a ring is obtained as an 
invariant subring $k[[x,y]]^G$ with $G$ a finite subgroup of $\SL_2=\SL(V)$, 
where $V=kx\oplus ky$.

Before considering this question, we have to consider several things.

First, any finite subgroup of $\SL_2(\Bbb C)$ is small in the sense that
it does not have a pseudo-reflection, where an element $g$ of $\GL(V)$ is
called a pseudo-reflection if $\rank(g-1_V)=1$.
This is important in studying the ring of invariants.
If $G$ is a small finite subgroup of $\GL(V)$ ($V=\Bbb C^2$), then
$G$ can be recovered from $\hat R=\hat S^G$, where $\hat S$ 
is the completion of 
$S=\Sym V$, in the sense that
the fundamental group of $\Spec \hat 
R\setminus\{0\}$ is $G$, where $0$ is the
unique closed point.
Moreover, 
the category of maximal Cohen--Macaulay modules of $\hat R$ is canonically
equivalent to the category of $\hat S$-finite $\hat S$-free $(G,
\hat S)$-modules \cite[(10.9)]{Yoshino}.
However, this is not the case for $\SL_2(k)$ with $\charac(k)>0$.
Indeed, a finite subgroup of $\SL_2(k)$ may have a transvection,
where $g\in\GL(V)$ is called a transvection if it is a pseudo-reflection and
$g-1_V$ is nilpotent.
Even if $G$ is a non-trivial subgroup of $\SL_2$, 
$\hat S^G$ may be a formal power series ring again, 
see \cite[Proposition~4.6]{KS}.

Next, even if $G$ is a finite subgroup of $\SL(V)$, the ring of invariants
$R=(\Sym V)^G$ may not be $F$-regular.
Indeed, Singh \cite{Singh} proved that if $G$ is the alternating group 
$A_n$ acting canonically on $V=k^n$, then $R=(\Sym V)^G$ is
strongly $F$-regular if and only if $p=\charac(k)$ does not divide
the order $(n!)/2$ of $G=A_n$.
More generally, Yasuda \cite{Yasuda} proved that if $G$ is a small subgroup of
$\GL(V)$, then the ring of invariants $(\Sym V)^G$ is strongly $F$-regular
if and only if $p=\charac(k)$ does not divide the order of $G$.

So we want to classify the subgroups $G\subset\SL_2$ with the order of
$G$ is not divisible by $p=\charac(k)$.
It is easy to see that such $G$ must be small.
The classification is known (see Theorem~\ref{dornhoff.thm}), 
and the result is the same as that over $\Bbb C$,
except that small $p$ which divides the order $|G|$ of $G$ is not allowed.
More precisely, for the type $(A_n)$, $p$ must not divide $n+1$, 
for $(D_n)$, $p$ must not divide $4n-8$, and we must have 
$p\geq 5$, $p\geq 5$, $p\geq 7$ for type $(E_6)$, $(E_7)$, and $(E_8)$,
respectively.
However, the restriction on $p$ for the 
classification of two-dimensional $F$-rational
Gorenstein complete local rings is different \cite{HL}, and it is
$p$ arbitrary for $(A_n)$, $p\geq 3$ for $(D_n)$, and
$p\geq 5$, $p\geq 5$, $p\geq 7$ for type $(E_6)$, $(E_7)$, and $(E_8)$,
respectively.

The purpose of this paper is to show the gap occuring on the type $(A_n)$ and
$(D_n)$ comes from the non-reduced group schemes, as shown in
Theorem~\ref{main.thm}.
As a corollary, we show that all the two-dimensional $F$-rational 
Gorenstein complete local rings with the algebraically closed
coefficient field appear as the ring of invariants under the action of
a linearly reductive finite subgroup scheme of $\SL_2$, see
Corollary~\ref{main-cor.thm}.
This is already pointed out by Artin \cite{Artin} for the type $(A_n)$,
and is trivial for $(E_6)$, $(E_7)$, and $(E_8)$ because of the order
of the group and the restriction on $p$.
What is new in this paper is the case $(D_n)$.
At this moment, the author does not know how to recover the group
scheme $G$ from $R=S^G$.
So although the classification of $R$, the two-dimensional $F$-rational
Gorenstein singularities are well-known, the classification of $G$ seems to be
nontrivial for the author.
As a result, we can recover $G$ from $R$ in the sense that the correspondence
from $G$ to $\hat R=\hat S^G$ is one-to-one.

The key to the proof is Sweedler's theorem (Theorem~\ref{sweedler.thm})
which states that a connected linearly reductive group scheme
over a field of positive characteristic is abelian.

The author thanks Professor Kei-ichi Watanabe for valuable advice.

\section{Preliminaries}

\paragraph
Let $k$ be a field.
For a $k$-scheme $X$, we denote the ring $H^0(X,\O_X)$ by $k[X]$.

We say that an affine algebraic $k$-group scheme $G$ is
linearly reductive if any $G$-module is semisimple.

\begin{lemma}\label{short.thm}
Let
\[
1\rightarrow N \rightarrow G\rightarrow H\rightarrow 1
\]
be an exact sequence of affine algebraic $k$-group schemes.
Then $G$ is linearly reductive if and only if $H$ and $N$ are linearly
reductive.
\end{lemma}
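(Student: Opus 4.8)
The plan is to work directly with the definition of linear reductivity in terms of semisimplicity of modules, using the exactness of restriction and induction-type functors along $N \hookrightarrow G \twoheadrightarrow H$. First I would treat the "only if" direction. Suppose $G$ is linearly reductive. For $H$: every $H$-module $M$ pulls back along $G \to H$ to a $G$-module, and an $H$-submodule is the same as a $G$-submodule (since $G$ acts through $H$); so semisimplicity as a $G$-module forces semisimplicity as an $H$-module. For $N$: given an $N$-module $W$ and an $N$-submodule $W' \subseteq W$, I would like to split the inclusion $N$-equivariantly. The standard device is to pass to the induced $G$-modules $\mathrm{Ind}_N^G W' \subseteq \mathrm{Ind}_N^G W$ (i.e.\ the "coinduced" module $(W \otimes_k k[G])^N$ with $G$ acting via right translation), which are $G$-modules; split that inclusion $G$-equivariantly using linear reductivity of $G$, and then recover an $N$-splitting of $W' \hookrightarrow W$ by applying the unit/counit of the adjunction (evaluation at the identity section), which is $N$-equivariant. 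The only point needing care is exactness of $\mathrm{Ind}_N^G$ and compatibility of the adjunction maps; for finite — or more generally for our eventual setting — group schemes $k[G]$ is nice enough that this is routine, but one may alternatively phrase the argument using that $k[G]$ is faithfully flat (even free) over $k[N]$.

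For the "if" direction, assume $N$ and $H$ are both linearly reductive and let $M$ be a $G$-module with a $G$-submodule $M' \subseteq M$. The key idea is a two-step reduction: first split as $N$-modules, then average over $H = G/N$. Concretely, restrict to $N$: by linear reductivity of $N$ there is an $N$-module retraction $\pi\colon M \to M'$. Now form $M^N$ and $(M')^N$; taking $N$-invariants is exact here because $N$ is linearly reductive (invariants is the trace/Reynolds-type functor, which is exact on $N$-modules precisely when $N$ is linearly reductive), and $M^N$, $(M')^N$ carry natural $H$-module structures with $(M')^N \subseteq M^N$ an $H$-submodule. By linear reductivity of $H$ this inclusion splits $H$-equivariantly. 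The remaining task is to assemble an honest $G$-splitting of $M' \hookrightarrow M$ out of the $N$-splitting and the $H$-splitting — the cleanest way is a Reynolds-operator argument: the $N$-retraction $\pi$ need not be $G$-equivariant, but one can "correct" it by averaging its $G$-translates over the quotient $H$, where the averaging makes sense because the relevant space of retractions $\mathrm{Hom}_N(M,M')$ is an $H$-module and $H$ is linearly reductive, so it has a Reynolds operator onto $\mathrm{Hom}_N(M,M')^H = \mathrm{Hom}_G(M,M')$. Applying that operator to $\pi$ produces the desired $G$-equivariant retraction, and hence $M = M' \oplus \Ker$ as $G$-modules; since $M$ and $M'$ were arbitrary, $G$ is linearly reductive.

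The main obstacle is making the averaging/correction step in the "if" direction precise without circularity: one must ensure that the space in which one averages ($\mathrm{Hom}_N(M,M')$, or equivalently a suitable space of $N$-splittings) is genuinely an $H$-module — which uses that $N$ acts trivially on $N$-equivariant Homs, so $G$ acts through $H$ — and that "taking $H$-invariants" there is exact, which is exactly the hypothesis that $H$ is linearly reductive. The "only if" direction is comparatively soft. I would present the proof in the order: (1) $G$ linearly reductive $\Rightarrow$ $H$ linearly reductive (pullback); (2) $G$ linearly reductive $\Rightarrow$ $N$ linearly reductive (induction plus adjunction); (3) $N,H$ linearly reductive $\Rightarrow$ $G$ linearly reductive (restrict-and-average), with the Reynolds operator for $H$ as the crucial tool in step (3).
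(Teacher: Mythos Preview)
Your outline is essentially correct and yields a valid proof, but it is organized quite differently from the paper's. The paper argues entirely through cohomology and spectral sequences: for the ``if'' direction it invokes the Lyndon--Hochschild--Serre spectral sequence $E_2^{p,q}=H^p(H,H^q(N,M))\Rightarrow H^{p+q}(G,M)$, which collapses to $H^n(G,M)=0$ for $n>0$; for ``$G$ linearly reductive $\Rightarrow$ $N$ linearly reductive'' it uses the Grothendieck spectral sequence $E_2^{p,q}=H^p(G,R^q\ind_N^G M)\Rightarrow H^{p+q}(N,M)$, together with $R^q\ind_N^G=0$ for $q>0$ (affineness of $G/N\cong H$). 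Your approach unpacks the same content as explicit splitting arguments: your ``if'' step via the Reynolds operator on $\Hom_N(M,M')$ is precisely the statement that $(-)^G=((-)^N)^H$ is a composite of exact functors, and your ``only if, $N$'' step is the statement that $(-)^N=(-)^G\circ\ind_N^G$ is a composite of exact functors. Your route is more elementary and self-contained; the paper's is shorter and situates the lemma in standard cohomological machinery (citing Jantzen for the needed exactness of induction).

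One step you should tighten: in your ``$G$ linearly reductive $\Rightarrow$ $N$ linearly reductive'' argument, you say you will ``recover an $N$-splitting of $W'\hookrightarrow W$ by applying the unit/counit of the adjunction (evaluation at the identity section).'' For the adjunction $\mathrm{res}^G_N\dashv\ind_N^G$, the counit goes $(\ind_N^G W)|_N\to W$, and there is no natural $N$-map $W\to(\ind_N^G W)|_N$ to compose with, so a $G$-retraction $\ind_N^G W\to\ind_N^G W'$ does not descend to an $N$-retraction $W\to W'$ by a simple pre/post-composition. The clean fix is exactly what underlies the paper's spectral sequence: observe that $W^N\cong(\ind_N^G W)^G$ (Frobenius reciprocity applied to the trivial module), so $(-)^N$ factors as $\ind_N^G$ followed by $(-)^G$; both are exact (the first because $G/N$ is affine, the second because $G$ is linearly reductive), hence $(-)^N$ is exact. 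I would replace your adjunction sentence with this one-line argument.
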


\begin{proof}
We prove the \lq if' part.
If $M$ is a $G$-module, then the Lyndon-Hochschild-Serre spectral sequence
\cite[(I.6.6)]{Jantzen}
\[
E_2^{p,q}=H^p(H,H^q(N,M))\Rightarrow H^{p+q}(G,M)
\]
degenerates, and $E_2^{p,q}=0$ for $(p,q)\neq (0,0)$ by assumption.
Thus $H^n(G,M)=0$ for $n>0$, as required.

We prove the \lq only if' part.
First, given a short exact sequence of $H$-modules, it is also a short
exact sequence of $G$-modules by restriction.
By assumption, it $G$-splits, and hence it $H$-splits.
Thus any short exact sequence of $H$-modules $H$-splits, and $H$ is linearly
reductive.
Next, we prove that $N$ is linearly reductive.
Let $M$ be a finite dimensional $N$-module.
Then there is a spectral sequence
\[
E_2^{p,q}=H^p(G,R^q\ind_N^G(M))\Rightarrow H^{p+q}(N,M),
\]
see \cite[(I.4.5)]{Jantzen}.
As $G/N\cong H$ is affine, $R^q\ind_N^G(M)=0$ $(q>0)$ by
\cite[(I.5.13)]{Jantzen}.
As $G$ is linearly reductive by assumption,
$E_2^{p,q}=0$ for $(p,q)\neq (0,0)$.
Thus $H^n(N,M)=0$ for $n>0$, and thus $N$ is linearly reductive.
\end{proof}

\paragraph
Let $C=(C,\Delta,\varepsilon)$ be a $k$-coalgebra.
An element $c\in C$ is said to be group-like if $c\neq 0$ and 
$\Delta(c)=c\otimes c$
\cite{Sweedler}.
If so, $\varepsilon(c)=1$.
The set of group-like elements of $C$ is denoted by $\Cal X(C)$.
Note that $\Cal X(C)$ is linearly independent.

Let $H$ be a $k$-Hopf algebra.
Then for $h\in \Cal X(H)$, $\Cal S(h)=h^{-1}$, where $\Cal S$ is the antipode.
Note that $\Cal X(H)$ is a subgroup of the unit group $H^\times$.
We denote $\GL_1=\Spec k[t,t^{-1}]$ with $t$ group-like 
by $\Bbb G_m$, and its subgroup scheme $\Spec k[t]/(t^r-1)$ by
$\bmu_r$ for $r\geq 0$.
Note that $\bmu_r$ represents the group of the $r$th roots of unity, but
it is not a reduced scheme if $\charac(k)=p$ divides $r$.

\paragraph
In the rest of this paper, let $k$ be algebraically closed.
For an affine algebraic group scheme $G$ over $k$, let $\Cal X(G)$ denote
the group of characters (one-dimensional representations) of $G$.
Note that $\Cal X(G)$ is canonically identified with $\Cal X(k[G])$, 
see \cite[(2.1)]{Waterhouse}.

\begin{lemma}
Let $G$ be an affine algebraic $k$-group scheme.
Then the following are equivalent.
\begin{enumerate}
\item[\bf 1] $G$ is abelian \(that is, the product is commutative\) and
linearly reductive.
\item[\bf 2] $G$ is linearly reductive, and any simple $G$-module
is one-dimensional.
\item[\bf 3] $G$ is diagonalizable.
That is, a closed subgroup scheme of a torus $\Bbb G_m^n$.
\item[\bf 4] The coordinate ring $k[G]$ is group-like as a coalgebra.
That is, $k[G]$ is the group ring $k\Gamma$, where $\Gamma=\Cal X(G)$.
\item[\bf 5] $G$ is a finite direct product of $\Bbb G_m$ and $\bmu_r$ with
$r\geq 2$.
\end{enumerate}
\end{lemma}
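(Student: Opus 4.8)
The plan is to prove the equivalence of the five conditions by establishing a cycle of implications, roughly $\mathbf{1}\Rightarrow\mathbf{2}\Rightarrow\mathbf{4}\Rightarrow\mathbf{3}\Rightarrow\mathbf{5}\Rightarrow\mathbf{1}$, pulling in Sweedler's theorem (Theorem~\ref{sweedler.thm}) only where it is genuinely needed, namely to get past the commutativity hypothesis in $\mathbf{1}$. Actually the cleanest route uses $\mathbf{1}\Leftrightarrow\mathbf{2}$ via character theory over the algebraically closed $k$, then $\mathbf{2}\Rightarrow\mathbf{4}$ by a Peter--Weyl/coalgebra argument, $\mathbf{4}\Rightarrow\mathbf{3}$ essentially by definition of the group ring, $\mathbf{3}\Rightarrow\mathbf{1}$ since subgroup schemes of tori are manifestly abelian and linearly reductive, and finally $\mathbf{4}\Rightarrow\mathbf{5}$ by the structure of finitely generated abelian groups together with the observation that $k\Bbb Z=k[\Bbb G_m]$ and $k(\Bbb Z/r)=k[\bmu_r]$.

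First I would record the standard input: since $k$ is algebraically closed, a $G$-module is the same as a comodule over $k[G]$, and linear reductivity is equivalent to $k[G]$ being a direct sum (as a coalgebra) of its simple subcoalgebras, each of which is the coalgebra $\End(M_i)^*$ of matrix coefficients of a simple $G$-module $M_i$. For $\mathbf{1}\Rightarrow\mathbf{2}$: $G$ is linearly reductive, so it suffices to show every simple $G$-module is one-dimensional. Here is where Sweedler's theorem enters in the connected case, but for a general abelian linearly reductive $G$ one argues directly: the commutator subgroup scheme is trivial, so every simple module, being a module over the commutative Hopf algebra $k[G]$, has its image in $\GL(M)$ landing in a commutative group scheme, hence (after extending to an algebraic closure, already done) is diagonalizable and thus decomposes into characters; simplicity forces $\dim M=1$. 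The converse $\mathbf{2}\Rightarrow\mathbf{1}$ is immediate: matrix coefficients of one-dimensional representations multiply commutatively and generate $k[G]$, so $k[G]$ is commutative, i.e.\ $G$ is abelian.

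For $\mathbf{2}\Rightarrow\mathbf{4}$: if every simple $G$-module is one-dimensional, each simple subcoalgebra of $k[G]$ is one-dimensional, spanned by a group-like element (the character), so $k[G]=\bigoplus_{\chi\in\Gamma}k\chi$ with $\Gamma=\Cal X(G)=\Cal X(k[G])$, which is exactly the statement that $k[G]$ is the group ring $k\Gamma$; the Hopf structure then matches that of a group ring because the comultiplication of each group-like $\chi$ is $\chi\otimes\chi$. Then $\mathbf{4}\Rightarrow\mathbf{3}$: writing $k[G]=k\Gamma$ exhibits $G$ as the Cartier dual of the constant group $\Gamma$; choosing generators of $\Gamma$ gives a surjection $\Bbb Z^n\twoheadrightarrow\Gamma$, dually a closed immersion $G\hookrightarrow\Bbb G_m^n$. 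The step $\mathbf{3}\Rightarrow\mathbf{1}$ is formal: $\Bbb G_m^n$ is abelian and linearly reductive (its representation theory is that of $\Bbb Z^n$-gradings), and by Lemma~\ref{short.thm} a closed subgroup scheme of a linearly reductive group is linearly reductive, while a subgroup of an abelian group is abelian. Finally $\mathbf{4}\Leftrightarrow\mathbf{5}$: a finite product $\Bbb G_m^a\times\prod\bmu_{r_i}$ has coordinate ring $k[\Bbb Z^a\times\prod\Bbb Z/r_i]$, which is group-like, giving $\mathbf{5}\Rightarrow\mathbf{4}$; conversely from $\mathbf{4}$, $\Gamma=\Cal X(G)$ is a finitely generated abelian group (since $k[G]=k\Gamma$ is a finitely generated $k$-algebra, $\Gamma$ is finitely generated), so $\Gamma\cong\Bbb Z^a\oplus\bigoplus\Bbb Z/r_i$ with each $r_i\ge 2$, and applying Cartier duality to this decomposition yields $G\cong\Bbb G_m^a\times\prod\bmu_{r_i}$.

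**The main obstacle** I anticipate is the implication $\mathbf{1}\Rightarrow\mathbf{2}$ — the assertion that an abelian linearly reductive group scheme in positive characteristic has only one-dimensional simple modules. Over $\Bbb C$ this is classical character theory, but in characteristic $p$ one must be careful that ``abelian'' refers to the group scheme and that even connected unipotent-looking group schemes like $\bmu_p$ behave well; the safe argument is that a simple comodule $M$ over the commutative Hopf algebra $k[G]$ gives a homomorphism $G\to\GL(M)$ whose scheme-theoretic image is a commutative algebraic subgroup scheme of $\GL(M)$, and since $G$ is linearly reductive so is this image (again by Lemma~\ref{short.thm}), hence it is a commutative linearly reductive — so by the easy part, diagonalizable — subgroup, over which $M$ breaks into weight lines; simplicity of $M$ as a $G$-module then forces $\dim M=1$. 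One should take care that this does not circularly invoke the very theorem being proved; it only uses the \emph{easy} direction ($\mathbf{3}\Rightarrow$ one-dimensional simples) applied to the image, which is legitimate. All other steps are essentially bookkeeping with Hopf algebras and the structure theorem for finitely generated abelian groups.
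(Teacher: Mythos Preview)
Your cycle $\mathbf{2}\Rightarrow\mathbf{4}\Rightarrow\mathbf{3}\Rightarrow\mathbf{1}$ and $\mathbf{4}\Leftrightarrow\mathbf{5}$ is correct and, apart from routing through $\mathbf{4}$ before $\mathbf{3}$, close to the paper's chain $\mathbf{1}\Rightarrow\mathbf{2}\Rightarrow\mathbf{3}\Rightarrow\mathbf{4}\Rightarrow\mathbf{5}$ (with the reverse declared easy). The genuine gap is in your $\mathbf{1}\Rightarrow\mathbf{2}$, which is circular. You pass to the scheme-theoretic image $G'\subset\GL(M)$, note that $G'$ is abelian and linearly reductive, and then say that ``by the easy part'' $G'$ is diagonalizable. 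But ``abelian and linearly reductive $\Rightarrow$ diagonalizable'' is exactly $\mathbf{1}\Rightarrow\mathbf{3}$, the very implication you are establishing; nothing about $G'$ makes this easier than for $G$ itself. Your disclaimer that you ``only use $\mathbf{3}\Rightarrow$ one-dimensional simples'' misplaces the gap: you have not put $G'$ into condition~$\mathbf{3}$, you have only put it into condition~$\mathbf{1}$. (Incidentally, Theorem~\ref{sweedler.thm} plays no role in this lemma at all, since $\mathbf{1}$ already assumes $G$ abelian; your opening remark about invoking it is a red herring.)

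The clean fix, which is what the paper does by citing \cite[(8.0.1)]{Sweedler}, is to argue directly at the coalgebra level. Since $G$ is abelian, $k[G]$ is \emph{cocommutative} (not merely commutative---it is always that). Over the algebraically closed field $k$, every simple cocommutative coalgebra is one-dimensional, dually to the fact that every finite-dimensional simple commutative $k$-algebra equals $k$. Hence every simple subcoalgebra of $k[G]$ is spanned by a group-like element, and every simple $G$-comodule is one-dimensional. Equivalently: the distribution algebra $k[G]^\circ$ acting on a simple $M$ is commutative (because $k[G]$ is cocommutative), its image in $\End_k(M)$ is a commutative $k$-algebra over which $M$ is simple, hence a field, hence $k$, so $\dim_k M=1$. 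Either formulation replaces your circular step without appeal to anything beyond elementary coalgebra.
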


\begin{proof}
{\bf 1$\Rightarrow$2}
Follows easily from \cite[(8.0.1)]{Sweedler}.

{\bf 2$\Rightarrow$3}
Take a finite dimensional faithful $G$-module $V$
(this is possible \cite[(3.4)]{Waterhouse}).
Take a basis $v_1,\ldots,v_n$ of $V$ such that each $kv_i$ is a
one-dimensional $G$-submodule of $V$.
Then the embedding $G\rightarrow \GL(V)$ factors through
$\GL(kv_1)\times\cdots\times \GL(kv_n)\cong\Bbb G_m^n$.

{\bf 3$\Rightarrow$4}
Let $G\subset \Bbb G_m^n=T$.
Then $k[T]$ is a Laurent polynomial ring $k[t_1^{\pm1},\ldots,t_n^{\pm 1}]$.
As each Laurent monomial $t_1^{\lambda_1}\cdots t_n^{\lambda_n}$ is group-like,
$k[T]$ is generated by its group-like elements.
This property is obviously inherited by its quotient Hopf algebra
$k[G]$, and we are done.

{\bf 4$\Rightarrow$5} Apply the fundamental theorem of abelian groups on
$\Gamma$.

{\bf 5$\Rightarrow$4$\Rightarrow$2$\Rightarrow$1} is easy.
\end{proof}

\paragraph
The category of finitely generated abelian groups and the
category of diagonalizable $k$-group schemes are contravariantly
equivalent with
the equivalences $\Gamma\mapsto \Spec (k\Gamma)$ and
$G\mapsto \Cal X(G)$.
For a diagonalizable $k$-group scheme $G$, a $G$-module is identified with
an $\Cal X(G)$-graded $k$-vector space.
A $G$-algebra is nothing but a $\Cal X(G)$-graded $k$-algebra.

\paragraph
For a diagonalizable group scheme $G=\Spec k\Gamma$, the closed
subgroup schemes
$H$ of $G$ is in one-to-one correspondence with the quotient groups $M$ of 
$\Gamma$ with the correspondence $H\mapsto \Cal X(H)$ and
$M\mapsto \Spec kM$.
In particular, the only closed 
subgroup schemes of $\Bbb G_m$ is $\bmu_r$ with $r\geq 0$,
since the only quotient groups of $\Bbb Z$ are $\Bbb Z/r\Bbb Z$.

The following is due to Sweedler \cite{Sweedler2}.

\begin{theorem}\label{sweedler.thm}
Let $G$ be a connected linearly reductive affine algebraic $k$-group 
scheme over an algebraically closed field of positive characteristic $p$.
Then $G$ is an abelian group \(and hence is diagonalizable\).
So $G$ is, up to isomorphisms, of the form
\[
\Bbb G_m^r\times \bmu_{p^{e_1}}\times\cdots\times \bmu_{p^{e_s}}
\]
for some $r\geq 0$, $s\geq 0$, and $e_1\geq \cdots e_s\geq 1$.
\end{theorem}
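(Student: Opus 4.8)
The statement has two ingredients: that a connected linearly reductive affine algebraic $k$-group scheme in characteristic $p>0$ is abelian, and the precise shape of a \emph{connected} diagonalizable group scheme. The plan is to treat them in that order. Granting the first, $G$ is diagonalizable by the implication $\mathbf{1}\Rightarrow\mathbf{3}$ of the preceding lemma (abelian plus linearly reductive implies diagonalizable), and then the classification is read off from the structure theory of diagonalizable group schemes recalled above. The only genuinely hard point is the first ingredient — this is Sweedler's theorem \cite{Sweedler2}, which I would cite — but I indicate below the argument I would reconstruct, in order to locate the obstacle.

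For ``$G$ is abelian'': since $k$ is perfect, the reduced subscheme $G\red$ is a smooth closed subgroup scheme of $G$, it is normal (being characteristic), and $G/G\red$ has dimension $0$ with trivial reduced part, hence is infinitesimal and in particular affine. Applying Lemma~\ref{short.thm} to $1\to G\red\to G\to G/G\red\to 1$ shows that both $G\red$ and $Q:=G/G\red$ are linearly reductive. Now $G\red$ is a smooth connected linearly reductive affine $k$-group, hence a torus $\Bbb G_m^{n}$ by the classical theorem of Nagata; since the automorphism functor of a torus is represented by the étale constant group scheme $\GL_n(\Bbb Z)$, the conjugation map from the connected group $G$ to the automorphism group scheme of $G\red$ is trivial, so $G\red$ is central in $G$. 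Thus $G$ is a central extension of the infinitesimal linearly reductive group scheme $Q$ by a torus. It remains to show that $Q$ is of multiplicative type and that the resulting alternating commutator pairing $Q\times Q\to G\red$ vanishes; this is the step where positive characteristic, and the linear reductivity of $G$ itself (not merely of $G\red$ and $Q$), enter in an essential way. I expect this infinitesimal case to be the main obstacle: it is precisely the core of Sweedler's — and of Nagata's — theorem, and there is no shortcut using only the material assembled above.

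Granting that $G$ is diagonalizable, the remainder is bookkeeping. By the preceding lemma together with the (anti)equivalence between finitely generated abelian groups and diagonalizable $k$-group schemes, $G=\Spec k\Gamma$ with $\Gamma:=\Cal X(G)$, and by the structure theorem for finitely generated abelian groups $\Gamma\cong\Bbb Z^{r}\oplus\Bbb Z/n_{1}\oplus\cdots\oplus\Bbb Z/n_{t}$ with each $n_{i}\geq 2$. Correspondingly $G\cong\Bbb G_m^{r}\times\prod_{i=1}^{t}\Spec\bigl(k[t]/(t^{n_{i}}-1)\bigr)$. Writing $n_{i}=p^{e_{i}}m_{i}$ with $p\nmid m_{i}$, one has $t^{n_{i}}-1=\prod_{\zeta^{m_{i}}=1}(t-\zeta)^{p^{e_{i}}}$ with the $m_{i}$ values $\zeta$ pairwise distinct, so $\Spec\bigl(k[t]/(t^{n_{i}}-1)\bigr)$ has exactly $m_{i}$ connected components. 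Since $G$, and hence each of these factors, is connected, $m_{i}=1$; thus $n_{i}=p^{e_{i}}$ with $e_{i}\geq 1$ and $\Spec\bigl(k[t]/(t^{p^{e_{i}}}-1)\bigr)=\bmu_{p^{e_{i}}}$. After reordering so that $e_{1}\geq\cdots\geq e_{s}\geq 1$ (with $s=t$), this gives $G\cong\Bbb G_m^{r}\times\bmu_{p^{e_{1}}}\times\cdots\times\bmu_{p^{e_{s}}}$, as asserted.
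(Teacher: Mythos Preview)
The paper offers no proof of this theorem: it is stated with the attribution ``The following is due to Sweedler \cite{Sweedler2}'' and nothing more. Your proposal does exactly the same for the substantive claim (connected linearly reductive in characteristic $p$ implies abelian), and then supplies the straightforward deduction of the explicit shape from connectedness plus the structure theory of diagonalizable groups recalled just before the theorem; that bookkeeping is correct and is left implicit in the paper.

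One caveat on your heuristic sketch toward Sweedler's result: the assertion that $G\red$ is normal in $G$ ``being characteristic'' is not valid scheme-theoretically. Conjugation by an $R$-point of $G$ with $R$ non-reduced is an automorphism of $G_R$, not of $G$, and $(G_R)\red$ need not coincide with $(G\red)_R$; indeed $G\red$ fails to be normal already in $\alpha_p\rtimes\Bbb G_m$ (with $\Bbb G_m$ acting by scaling). Since you explicitly defer to \cite{Sweedler2} for this step and flag it as the obstacle, this does not affect the correctness of your proposal, but the short exact sequence $1\to G\red\to G\to G/G\red\to 1$ is not available in general and is not the route Sweedler takes.
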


\paragraph
Let $G$ be an affine algebraic $k$-group scheme.
Note that $\Spec k$, $G\red$ and $G\red\times G\red$ are all reduced.
Hence the unit map $e:\Spec k\rightarrow G$, the inverse 
$\iota:G\red\rightarrow G$, and the product $\bmu:G\red\times G\red\rightarrow 
G$ all factor through $G\red\hookrightarrow G$, and so  $G\red$ is a 
closed subgroup scheme of $G$.
Thus $G\red$ is $k$-smooth.

\paragraph
We denote the identity component (the connected component containing the
identity element) of $G$ by $G^\circ$.
As $G\red\hookrightarrow G$ is a homeomorphism and $G\red$ is $k$-smooth,
each connected component of $G$ is irreducible, and is isomorphic to 
$G^\circ$.
As $\Spec k$, $G^\circ$, and $G^\circ\times G^\circ$ are all irreducible,
it is easy to see that
the unit map, the inverse, the product from them all factor through
$G^\circ\hookrightarrow G$, and hence $G^\circ$ is a closed open subgroup of
$G$.
If $C$ is any irreducible component of $G$, then the image of the
map $C\times G^\circ
\rightarrow G$ given by $(g,n)\mapsto gng^{-1}$ is contained in $G^\circ$.
Thus $G^\circ$ is a normal subgroup scheme of $G$.
That is, the map $G\times G^\circ\rightarrow G$ given by 
$(g,n)\mapsto gng^{-1}$ factors through $G^\circ$.

\paragraph As the inclusion $G^\circ \cdot G\red\hookrightarrow G$ is a
surjective open immersion, we have that $G^\circ\cdot G\red=G$.
As $G^\circ$ is an open subscheme of $G$, $G^\circ\cap G\red=G^\circ\red$.
So if $G$ is finite, then $G$ is a semidirect product $G=G^\circ\rtimes G\red$.

\section{The classification}

\paragraph
Throughout this section,
let $k$ be an algebraically closed field of characteristic $p>0$.

The purpose of this section is to classify the linearly reductive finite
subgroup schemes of $\SL_2$ over $k$, up to conjugation.
Our starting point is the reduced case, which is well-known.
Unfortunately, the author does not know the proof of the theorem
below exactly as stated, but the proof in \cite[Section~26]{Dornhoff}
also works for the case of positive characteristic.
See also \cite[Chapter~6, Section~2]{LW}.

\begin{theorem}\label{dornhoff.thm}
Let $k$ be an algebraically closed field of characteristic $p>0$,
and $G$ a finite nontrivial subgroup of $\SL_2$.
Assume that the order $|G|$ of $G$ is not divisible by $p$.
Then $G$ is conjugate to one of the following, where $\zeta_r$ denotes
a primitive $r$th root of unity.
\begin{description}
\item[{$(A_n)$ $(n\geq 1)$}]
The cyclic group generated by
\[
\begin{pmatrix}
\zeta_{n+1} & 0 \\
0 & \zeta_{n+1}^{-1}
\end{pmatrix}.
\]
\item[{$(D_n)$ $(n\geq 4)$}]
The binary dihedral group generated by $(A_{2n-5})$ and
\[
\begin{pmatrix}
0 & \zeta_4 \\
\zeta_4 & 0
\end{pmatrix}.
\]
\item[{$(E_6)$}] The binary tetrahedral group generated by $(D_4)$ and
\[
\frac{1}{\sqrt{2}}
\begin{pmatrix}
\zeta_8^7 & \zeta_8^7 \\
\zeta_8^5 & \zeta_8
\end{pmatrix}
.
\]
\item[{$(E_7)$}] The binary octahedral group generated by $(E_6)$ and $(A_7)$.
\item[{$(E_8)$}] The binary icosahedral group generated by $(A_9)$,
\[
\begin{pmatrix}
0 & 1 \\
-1 & 0
\end{pmatrix},\quad
\text{and}
\quad
\frac{1}{\zeta_5^2-\zeta_5^3}
\begin{pmatrix}
\zeta_5+\zeta_5^{-1} & 1 \\
1 & -(\zeta_5+\zeta_5^{-1})
\end{pmatrix}.
\]
\end{description}
Conversely, if $g=n+1$ \(resp.\ $4n-8$, $24$, $48$, and $120$\) is not
zero in $k$, then $(A_n)$ \(resp.\ $(D_n)$, $(E_6)$, $(E_7)$, and $(E_8)$\) 
above is defined, and is a 
linearly reductive finite subgroup of $\SL_2$ of order $g$.
\end{theorem}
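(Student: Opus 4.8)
The plan is to run the classical orbit--counting argument of \cite[Section~26]{Dornhoff} on the action of $G$ on $\Bbb P^1=\Bbb P(V)$; the only places where the characteristic intervenes are: an element of $\GL(V)$ of order prime to $p$ is semisimple (its minimal polynomial divides the separable polynomial $x^{|G|}-1$), so a \emph{non-scalar} element of $G$ has exactly two distinct eigenlines, i.e.\ exactly two fixed points on $\Bbb P^1$; a finite group of order prime to $p$ is linearly reductive over $k$ via the averaging operator $\tfrac1{|G|}\sum_{h\in G}h$; and $\bmu_2(k)=\{1\}$ when $p=2$. If the image $\bar G$ of $G$ in $\mathrm{PGL}_2(k)$ is trivial, then $G\subseteq\{\pm1_V\}$, so $G=\{\pm1_V\}=(A_1)$ and $p\neq2$; assume henceforth $\bar G\neq1$ and set $N=|\bar G|$. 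Since $\ker(\SL_2\to\mathrm{PGL}_2)=\bmu_2$ meets $\SL_2(k)$ in $\{\pm1_V\}$, we have $N=|G|$ if $-1_V\notin G$ and $N=|G|/2$ if $-1_V\in G$, the latter forcing $p\neq2$; rescaling any lift of $\bar g\in\bar G$ by a scalar so that it has finite (prime-to-$p$) order shows that each non-identity element of $\bar G$ again has exactly two fixed points on $\Bbb P^1$.

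Counting the pairs $(x,\bar g)$ with $\bar g\neq1$ and $\bar g x=x$ (where $x$ ranges over the finite set of poles, i.e.\ points with non-trivial stabilizer) in two ways---once by $\bar g$, giving $2(N-1)$, and once by grouping poles into $\bar G$-orbits of sizes $N/\nu_i$ with stabilizer orders $\nu_1,\dots,\nu_t$---yields
\[
2-\frac2N=\sum_{i=1}^t\Bigl(1-\frac1{\nu_i}\Bigr),\qquad \nu_i\geq2 .
\]
Each summand lies in $[\tfrac12,1)$ and the left side is $<2$, so $t\in\{2,3\}$. For $t=2$, $1/\nu_1+1/\nu_2=2/N$ forces $\nu_1=\nu_2=N$, so $\bar G$ fixes two points of $\Bbb P^1$ and, being simultaneously diagonalizable, is cyclic. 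For $t=3$, the only possibilities with all $\nu_i\geq2$ are the multisets $\{m,2,2\}$ with $N=2m$ $(m\geq2)$, $\{3,3,2\}$ with $N=12$, $\{4,3,2\}$ with $N=24$, and $\{5,3,2\}$ with $N=60$.

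Now I pin down $\bar G$ up to conjugacy in $\mathrm{PGL}_2(k)$ and lift it to $G$; here I use that a finite point-stabilizer in $\mathrm{PGL}_2(k)$ of order prime to $p$ embeds in $\Bbb G_m$ (the unipotent radical of a Borel is $\Bbb G_a$, with only $p$-torsion), so each $\nu_i$-stabilizer is cyclic. For $t=2$, $\bar G$ lies in the diagonal torus after conjugation, so $G$ lies in the diagonal torus of $\SL_2$ and is generated by one semisimple $\mathrm{diag}(\zeta,\zeta^{-1})$; thus $G=(A_n)$ with $n+1=|G|$. For $t=3$: $\SL_2$ has no non-central element of order $2$ (such an element, being semisimple when $p\neq2$, would be conjugate to $\mathrm{diag}(1,-1)$, of determinant $-1$; when $p=2$, $|G|$ is odd and the $t=3$ cases do not occur), so every order-$2$ element of $\bar G$ lifts to an order-$4$ element of $G$ with square $-1_V$; since $N$ is even in all three cases, $-1_V\in G$ and $G$ is the full preimage of $\bar G$, of order $2N$. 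When $N=2m$: conjugate the index-$2$ cyclic subgroup of $\bar G$ into the diagonal torus with poles $\{0,\infty\}$; the remaining coset interchanges $0$ and $\infty$, hence is anti-diagonal, and a suitable diagonal conjugation normalizes a representative to the generator displayed in $(D_n)$, so $G$ is the binary dihedral group of order $2N=4m$, matching $(D_n)$ with $m=n-2$ (order $4n-8$). When $N=12,24,60$, the group $\bar G$ is abstractly $A_4$, $S_4$, $A_5$, and its embedding in $\mathrm{PGL}_2(k)$ is unique up to conjugacy---for $S_4$ its outer automorphism being realized by $\mathrm{PGL}_2$-conjugation---by reconstructing the pole configuration from its orbit sizes $N/\nu_i$ exactly as in \cite[Section~26]{Dornhoff}; hence $G$ is the binary tetrahedral, octahedral, or icosahedral group, i.e.\ $(E_6)$, $(E_7)$, $(E_8)$.

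For the converse, suppose the relevant integer $g\in\{n+1,\,4n-8,\,24,\,48,\,120\}$ is nonzero, hence a unit, in $k$. Then the roots of unity of orders dividing $g$ and the element $\sqrt2$ all lie in the algebraically closed field $k$, so the matrices displayed for the corresponding type lie in $\SL_2(k)$; the defining relations of the cyclic, binary dihedral, or binary polyhedral group hold exactly as over $\Bbb C$ because $g$ is invertible, so one obtains a subgroup of $\SL_2$ of order $g$, linearly reductive by the averaging operator above. I expect the main obstacle to be the conjugacy--uniqueness of the $A_4$, $S_4$, $A_5$ cases in $\mathrm{PGL}_2(k)$: in characteristic zero this uses $\PSL_2(\Bbb C)\cong\SO_3(\Bbb R)$ and the rigidity of the Platonic solids, which is unavailable here, so one argues instead through the pole-orbit combinatorics (as in \cite{Dornhoff}) or through the fact that a group of order prime to $p$ has the same semisimple representation theory as in characteristic zero; the remainder is a routine transcription of \cite{Dornhoff}.
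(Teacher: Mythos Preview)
The paper does not give its own proof of this theorem: it states that ``the proof in \cite[Section~26]{Dornhoff} also works for the case of positive characteristic'' and refers additionally to \cite[Chapter~6, Section~2]{LW}. Your proposal is precisely a sketch of the Dornhoff orbit-counting argument on $\Bbb P^1$, with the characteristic-$p$ issues (semisimplicity of prime-to-$p$ torsion, absence of $p'$-torsion in $\Bbb G_a$, the averaging operator) flagged at the right places, so you are doing exactly what the paper asserts can be done.
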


\paragraph
Let $G$ be a linearly reductive finite subgroup scheme of $\SL_2=\SL(V)$.
As the sequence
\[
1\rightarrow G^\circ \rightarrow G\rightarrow G\red\rightarrow 1
\]
is exact, both $G^\circ$ and $G\red$ are linearly reductive
by Lemma~\ref{short.thm}.

\paragraph
First, consider the case that $G$ is abelian.
Then the vector representation $V$ is the direct sum of two 
one-dimensional $G$-modules, say $V_1$ and $V_2$, and hence we may assume that 
$G$ is diagonalized.
As $G\subset\SL_2$, $V_2\cong V_1^*$.
Thus $G\rightarrow \GL(V_1)=\Bbb G_m$ is also a closed immersion, and
$G\cong \bmu_m$ is 
\[
\left\{
\begin{pmatrix}
a & 0 \\
0 & a^{-1}
\end{pmatrix}
\mid a\in \bmu_m
\right\}.
\]

\paragraph
So assume that $G$ is not abelian.
If $G^\circ$ is trivial, then $G=G\red$, and the classification for
this case is done in Theorem~\ref{dornhoff.thm}.
So assume further that $G^\circ$ is non-trivial.

$G^\circ$ is diagonalized
as above, since $G^\circ$ is linearly reductive and connected
(and hence is also abelian by Theorem~\ref{sweedler.thm}).
We have $G^\circ \cong \bmu_r$ with $r=p^e$ for some $e\geq 0$.

\paragraph
We consider the case that $G^\circ$ is contained in the group of
scalar matrices.
In this case, $r=2$ (so $p=2$), as $G\subset \SL_2$.
Then by Maschke's theorem, the order of $G\red$ is odd.
According to the classification in Theorem~\ref{dornhoff.thm}, 
$G\red$ must be of type $(A_n)$ and is cyclic.
This shows that $G$ is abelian, and this is a contradiction.

\paragraph
So $G^\circ$ is not contained in the group of scalar matrices.
Note that if $a,b,c,d\in k$ with $ad-bc=1$ and
\[
\begin{pmatrix}
a & b \\
c & d
\end{pmatrix}
\begin{pmatrix}
\zeta_r & 0 \\
0 & \zeta_r^{-1}
\end{pmatrix}
=
\begin{pmatrix}
\lambda & 0 \\
0 & \mu
\end{pmatrix}
\begin{pmatrix}
a & b \\
c & d
\end{pmatrix}
\]
for some $\lambda,\mu\in A=k[T,T^{-1}]/(T^r-1)$, where 
$\zeta_r$ is the image of $T$ in $A$, then
(1) $\lambda=\zeta_{r}$, $\mu=\zeta_r^{-1}$ and $b=c=0$, or
(2) $\lambda=\zeta_{r}^{-1}$, $\mu=\zeta_r$ and $a=d=0$.
This is because $\zeta_r\neq \zeta_r^{-1}$.
Then it is easy to see that the centralizer $C=Z_G(G^\circ)$ is
contained in the subgroup of diagonal matrices in $\SL_2$.
As we assume that $G$ is not abelian, $C\neq N_G(G^\circ)=G$.
Clearly, $C\red$ has index two in $G\red$.
This shows that the order of $G\red$ is divided by $2$.
By Maschke's theorem, $p\neq 2$.
There exists some matrix
\[
\begin{pmatrix}
0 & b \\
-b^{-1} & 0
\end{pmatrix}
\]
in $G\red$ for some $b\in k^\times$.
After taking conjugate by
\[
\begin{pmatrix}
b^{-1/2}\zeta_8 & 0\\
0 & b^{1/2}\zeta_8^{-1}
\end{pmatrix},
\]
we obtain the group scheme of type $(D_n)$ below 
(see Theorem~\ref{main.thm}) for appropriate $n$.

In conclusion, we have the following.

\begin{theorem}\label{main.thm}
Let $k$ be an algebraically closed field of arbitrary characteristic $p$
\(so $p$ is a prime number, or $\infty$\).
Let $G$ be a linearly reductive finite subgroup scheme of $\SL_2$.
Then, up to conjugation, $G$ agrees with one of the following, 
where $\zeta_r$ denotes
a primitive $r$th root of unity.
\begin{description}
\item[{$(A_n)$ $(n\geq 1)$}]
The group scheme $\bmu_{n+1}$ lying in $\SL_2$ as
\[
\left\{
\begin{pmatrix}
a  & 0 \\
0 & a^{-1}
\end{pmatrix}.
\mid
a\in\bmu_{n+1}
\right\}.
\]
\item[{$(D_n)$ $(n\geq 4)$}]
$p\geq 3$.
The subgroup scheme generated by $(A_{2n-5})$ and
\[
\begin{pmatrix}
0 & \zeta_4 \\
\zeta_4 & 0
\end{pmatrix}.
\]
\item[{$(E_6)$}] $p\geq 5$.
The binary tetrahedral group generated by $(D_4)$ and
\[
\frac{1}{\sqrt{2}}
\begin{pmatrix}
\zeta_8^7 & \zeta_8^7 \\
\zeta_8^5 & \zeta_8
\end{pmatrix}
.
\]
\item[{$(E_7)$}] $p\geq 5$.
The binary octahedral group generated by $(E_6)$ and $(A_7)$.
\item[{$(E_8)$}] $p\geq 7$.
The binary icosahedral group generated by $(A_9)$,
\[
\begin{pmatrix}
0 & 1 \\
-1 & 0
\end{pmatrix},\quad
\text{and}
\quad
\frac{1}{\zeta_5^2-\zeta_5^3}
\begin{pmatrix}
\zeta_5+\zeta_5^{-1} & 1 \\
1 & -(\zeta_5+\zeta_5^{-1})
\end{pmatrix}.
\]
\end{description}
Conversely, any of above is a
linearly reductive finite subgroup scheme of $\SL_2$, and 
a different type gives a non-isomorphic group scheme.
\end{theorem}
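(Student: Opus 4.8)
The plan is to reorganize the case analysis already begun in the paragraphs preceding the theorem into a clean forward argument, then handle the converse (realizability and non-isomorphism) separately. For the classification direction, I would start from the exact sequence $1\to G^\circ\to G\to G\red\to 1$, so that both $G^\circ$ and $G\red$ are linearly reductive by Lemma~\ref{short.thm}. If $G$ is abelian, the diagonalization argument already given shows $G\cong\bmu_{n+1}$ sitting in $\SL_2$ as in type $(A_n)$. If $G$ is not abelian but $G^\circ$ is trivial, then $G=G\red$ is an honest finite group and Theorem~\ref{dornhoff.thm} supplies the types $(D_n)$, $(E_6)$, $(E_7)$, $(E_8)$, together with the stated restrictions $p\geq 3,5,5,7$ (since $p\nmid|G|$ in the reduced case forces these when the type is $D$ or $E$). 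The substantive new case is $G^\circ$ nontrivial: by Theorem~\ref{sweedler.thm}, $G^\circ$ is connected linearly reductive, hence diagonalizable, hence $G^\circ\cong\bmu_r$ with $r=p^e$, $e\geq 1$; diagonalize it as $\mathrm{diag}(\zeta_r,\zeta_r^{-1})$.

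Next I would dispose of the sub-case where $G^\circ$ consists of scalar matrices: then $\zeta_r=\zeta_r^{-1}$, so $r=2$, $p=2$, and Maschke forces $|G\red|$ odd; by Theorem~\ref{dornhoff.thm} a finite subgroup of $\SL_2$ of odd order is cyclic (type $(A_n)$), so $G$ is abelian — contradiction. Hence $G^\circ=\bmu_r$ is \emph{non}-scalar, i.e.\ $\zeta_r\neq\zeta_r^{-1}$. The computation displayed just before the theorem shows that any element of $G$ normalizing $G^\circ$ either is diagonal or is anti-diagonal; since $G$ normalizes its own identity component, $G$ lands in the normalizer of the diagonal torus, the centralizer $C=Z_G(G^\circ)$ is exactly the diagonal part, and $C\red$ has index $2$ in $G\red$ (it cannot be all of $G\red$, else $G$ would be abelian). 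Thus $2\mid|G\red|$, so $p\neq 2$ by Maschke, and $G\red$ contains an anti-diagonal element $\begin{pmatrix}0&b\\-b^{-1}&0\end{pmatrix}$. Conjugating by $\mathrm{diag}(b^{-1/2}\zeta_8,\,b^{1/2}\zeta_8^{-1})$ normalizes this to $\begin{pmatrix}0&\zeta_4\\\zeta_4&0\end{pmatrix}$ while keeping $G^\circ$ diagonal; since the diagonal part $C$ is a finite diagonalizable group scheme in $\SL_2$ it equals $(A_{2n-5})$ for a unique $n$, and $G$ is generated by $(A_{2n-5})$ together with that anti-diagonal involution — this is precisely type $(D_n)$, with $p\geq 3$. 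That exhausts the classification.

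For the converse, I would first observe each listed $G$ is a finite subgroup scheme of $\SL_2$: types $(A_n)$ is $\bmu_{n+1}\subset\SL_2$ by construction, and for $(D_n)$ and the $E$-types linear reductivity follows since in those types the order of the (reduced) group is prime to $p$ under the stated hypotheses — invoke Maschke, together with Lemma~\ref{short.thm} applied to $1\to\bmu_{2n-5+1}\to(D_n)\to\mathbb Z/2\to1$ when $2n-4$ is a $p$-power times a unit (i.e.\ just check $p\mid 4n-8$ is not forced to fail; more cleanly, $(D_n)$ is an extension of a group of order prime to $p$ — since $p\geq3$ — by $(A_{2n-5})$ which is linearly reductive as a diagonalizable group scheme, so Lemma~\ref{short.thm} gives linear reductivity directly regardless of whether $(A_{2n-5})$ is reduced). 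Finally, non-isomorphism across types: compare invariants of $G^\circ$, $G\red$, and $|G|$. Types $(A_n)$ are distinguished from each other by $|G|=n+1$ and from all others by being abelian; $(D_n)$ is distinguished from $(E_\bullet)$ because its reduced part has a normal cyclic subgroup of index $2$ whereas the binary tetrahedral/octahedral/icosahedral groups do not, and two $(D_n)$'s are told apart by the order of $G\red$ (equivalently the index computation $|G\red|=2\cdot|({A_{2n-5}})\red|$); the three $E$-types have distinct orders $24,48,120$. The main obstacle is bookkeeping in the $(D_n)$ case — carefully identifying the diagonal centralizer $C$ with a $\bmu_{2n-4}$ and checking the conjugation normalizes both $G^\circ$ and the anti-diagonal involution simultaneously — but no conceptual difficulty arises beyond the Sweedler/Maschke inputs already in hand.
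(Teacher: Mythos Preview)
Your classification argument is correct and follows the paper's own exactly: the exact sequence $1\to G^\circ\to G\to G\red\to 1$, the abelian case giving $(A_n)$, the reduced non-abelian case handled by Theorem~\ref{dornhoff.thm}, and the remaining case ($G$ non-abelian with $G^\circ$ nontrivial) analyzed via Sweedler's theorem and the centralizer computation to produce type $(D_n)$ with $p\geq 3$. Your treatment of the converse is more explicit than the paper's, which simply asserts it.

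One small slip in that converse: $|G\red|$ alone does \emph{not} separate the $(D_n)$'s. Writing $n-2=p^a m$ with $\gcd(m,p)=1$, one has $|G^\circ|=p^a$ and $|G\red|=4m$; so, for example, at $p=3$ both $(D_4)$ and $(D_8)$ have $|G\red|=8$. Use $|G|=4n-8$ (or equivalently the pair $(|G^\circ|,|G\red|)$) instead, and the rest of your non-isomorphism argument goes through.
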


\paragraph
For a finite $k$-group scheme $G$ over $k$, we define $|G|:=\dim_k k[G]$.
Then in the theorem, $|G|$ is $n+1$ for $(A_n)$, $4n-8$ for $(D_n)$, and
24, 48, and 120 for $(E_6)$, $(E_7)$, and $(E_8)$, respectively.
This is independent of $p$, and hence is the same as the case for $p=\infty$.

\begin{corollary}\label{main-cor.thm}
Let $k$ be an algebraically closed field of positive characteristic.
Let $\hat R$ be a two-dimensional $F$-rational Gorenstein complete local
ring with the coefficient field $k$.
Then there is a linerly reductive finite subgroup scheme $G$ of 
$\SL_2=\SL(V)$, where $V=k^2$, such that 
the completion of $(\Sym V)^G$ with respect to the irrelevant maximal
ideal is isomorphic to $\hat R$.
Conversely, if $G$ is such a group scheme, then the completion of 
$(\Sym V)^G$ is a two-dimensional $F$-rational Gorenstein complete local
ring with the coefficient field $k$.
\end{corollary}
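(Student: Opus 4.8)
The plan is to prove the two implications of the Corollary separately. Throughout write $S=\Sym V=k[x,y]$ with its standard grading and $R=S^G$.

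\textbf{The converse direction} is formal. Let $G$ be one of the group schemes of Theorem~\ref{main.thm}. Since $G$ is finite, $S$ is module-finite over $R$, so $R$ is a two-dimensional Noetherian graded domain with $R_0=k^G=k$; hence its completion $\hat R$ at $R_{>0}$ is a two-dimensional complete Noetherian local ring having $k$ as a coefficient field. As $G$ is linearly reductive, the Reynolds operator splits $R\hookrightarrow S$ as a map of $R$-modules. Since $k$ is perfect, $S$ is $F$-finite and regular, hence strongly $F$-regular, and a direct summand of a strongly $F$-regular ring is strongly $F$-regular; thus $R$ is strongly $F$-regular, in particular Cohen--Macaulay and $F$-rational. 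Since $G\subset\SL(V)$, the determinant character of $G$ on $V$ is trivial, so the graded canonical module $\omega_S$ --- which is $S$ up to a grading shift --- carries the trivial $G$-action; by the description of the canonical module of the ring of invariants under a linearly reductive finite group scheme, $\omega_R\cong(\omega_S)^G\cong R$ up to a shift, so $R$ is quasi-Gorenstein, hence Gorenstein. All three properties descend to $\hat R$ because $R$ is excellent, which settles this direction.

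\textbf{For the existence of $G$} I would invoke the classification of two-dimensional $F$-rational Gorenstein complete local rings with algebraically closed coefficient field $k$ \cite{Artin}, \cite{WY}, \cite{HL}: $\hat R$ is the completion of a rational double point of type $(A_n)$ for arbitrary $p$, of type $(D_n)$ for $p\geq 3$, of type $(E_6)$ or $(E_7)$ for $p\geq 5$, or of type $(E_8)$ for $p\geq 7$. These are exactly the characteristics in which the group scheme of the same type in Theorem~\ref{main.thm} is defined, so it remains to identify $(\Sym V)^G$ with the rational double point of that name. For $(A_n)$ this is immediate: $G=\bmu_{n+1}$ acts diagonally and $(\Sym V)^G=k[x^{n+1},xy,y^{n+1}]\cong k[u,v,w]/(uv-w^{n+1})$ for every $p$. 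For $(E_6)$, $(E_7)$, $(E_8)$ the hypothesis forces $p\nmid|G|$, so $G$ is \'etale and equals the classical binary polyhedral group; since $p\nmid|G|$, forming invariants commutes with base change from a suitable ring of integers, and $(\Sym V)^G$ is the classical invariant ring, i.e.\ the corresponding $E$-type double point. For $(D_n)$ one must compute $(\Sym V)^G$ directly for the group scheme generated by $\bmu_{2n-4}$ and $\begin{pmatrix}0&\zeta_4\\\zeta_4&0\end{pmatrix}$; I would proceed in two steps: first $S^{\bmu_{2n-4}}=k[x^{2n-4},xy,y^{2n-4}]\cong k[a,b,c]/(ac-b^{2n-4})$ (the $(A_{2n-5})$ double point), then the invariants of the residual order-two automorphism induced by $\begin{pmatrix}0&\zeta_4\\\zeta_4&0\end{pmatrix}$, whose action on $a,b,c$ involves only signs and is therefore defined over $\Bbb Z[1/2]$. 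Since $p\geq 3$, these order-two invariants are cut out by an idempotent and so commute with base change to $k$; hence $(\Sym V)^G$ is the base change of a $\Bbb Z[1/2]$-model which, by the classical binary-dihedral computation, is the $(D_n)$ double point $k[u,v,w]/(u^2-v^2w+w^{n-1})$ (or another standard normal form). Completing then yields $\widehat{(\Sym V)^G}\cong\hat R$.

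\textbf{The main obstacle} is the $(D_n)$ identification. When $p\mid 2n-4$ the group scheme $G$ is non-reduced, so one cannot average over group elements, and the argument must instead rely on the base-change compatibility of each invariant-taking step --- valid for $\bmu_{2n-4}$-invariants because these form the degree-zero part of a $\Bbb Z/(2n-4)$-grading, and for the residual order-two invariants because $2$ is a unit in $k$. One must also confirm that the resulting presentation is recognizably that of the $(D_n)$ singularity, and not of some $(A_m)$ of the same order; this can be seen by computing the divisor class group $\Cl(R)$ or the dual graph of a resolution. Making all of this precise, uniformly in $p\geq 3$, is where essentially all of the work lies, and it is the case singled out as new in the introduction.
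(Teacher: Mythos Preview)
Your proposal is correct and largely parallel to the paper's proof, but the $(D_n)$ step is handled differently and is worth comparing.

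For the converse direction, your argument is exactly the theoretical one the paper records (in the Remark immediately following the Corollary): linear reductivity gives a Reynolds splitting, hence strong $F$-regularity; $G\subset\SL(V)$ forces $\omega_R\cong R$ via the identification $\omega_R\cong(\omega_S)^G$. For the existence direction, the cases $(A_n)$ and $(E_6),(E_7),(E_8)$ are treated the same way in both approaches (explicit grading for $(A_n)$, citation for the reduced $E$-cases).

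The genuine difference is in the $(D_n)$ identification. The paper writes down three explicit invariants
\[
x=uv\bigl(u^{2n-4}-(-1)^{n-2}v^{2n-4}\bigr),\qquad y=-2^{2/(n-1)}u^2v^2,\qquad z=2^{-1/(n-1)}\bigl(u^{2n-4}+(-1)^{n-2}v^{2n-4}\bigr),
\]
checks they satisfy $X^2+YZ^2+Y^{n-1}=0$, observes that this hypersurface is a two-dimensional normal domain, and then concludes $k[x,y,z]=R$ by a finite-birational/normality argument (using $[Q(S^{G'}):k(x,y,z)]\le 2$ and $|G/G'|=2$). Your route instead factors the computation as $\bmu_{2n-4}$-invariants followed by a $\Bbb Z/2$-quotient, and argues that each step is compatible with base change from a $\Bbb Z[1/2]$-model, so the answer must be the same hypersurface as in characteristic zero. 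This is a legitimate alternative: the $\bmu$-step is literally a graded component, and the $\Bbb Z/2$-step splits via $(1+\sigma)/2$ since $p\ge 3$. What the paper's direct computation buys is that it is fully self-contained in characteristic $p$ and sidesteps any subtlety about setting up integral models or matching normal forms; what your base-change argument buys is that it explains \emph{why} the equation is the classical one, rather than verifying it by inspection. Either way, the final check that the resulting equation is the $F$-rational $(D_n)$ in the Artin/Huneke--Leuschke list is by reference, as in the paper.
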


\begin{proof}
This follows from the theorem and the list in \cite[Example~18]{HL}.

Let $u,v$ be the standard basis of $V=k^2$ and $G$ be as in the list of
the theorem.
Let $S=k[u,v]$ and $R=S^G$.

The case that $G=(A_n)$. 
Then a $G$-algebra is nothing but a $\Cal X(G)=\Bbb Z/(n+1)\Bbb Z$-graded
$k$-algebra.
$S$ is a $G$-algebra with $\deg u=1$ and $\deg v=-1$,
and $R=S_0$, the degree $0$ component with respect to this grading.
Set $x=u^{n+1}$, $y=-v^{n+1}$ and 
$z=uv$.
Then it is easy to see that $R=k[x,y,z]$.
Obviously, it is a quotient of $R_1=k[X,Y,Z]/(XY+Z^{n+1})$.
As $R_1$ is a normal domain of dimension two, $R_1=R$.
So $\hat R$ is of type $(A_n)$.

The case that $G=(D_n)$.
Set
$x=uv(u^{2n-4}-(-1)^{n-2}v^{2n-4})$,
$y=-2^{2/(n-1)}u^2v^2$ and
$z=2^{-1/(n-1)}(u^{2n-4}+(-1)^{n-2}v^{2n-4})$.
Then
\[
k[x,y,z]\subset R=S^G=(S^{G'})^{G/G'}
\subset k[u^{2n-4},uv,v^{2n-4}]=S^{G'}\subset S,
\]
where $G'$ is the group scheme of type $(A_{2n-5})$.
Note that $k[x,y,z]$ is a quotient of
$R_1=k[x,y,z]=k[X,Y,Z]/(X^2+YZ^2+Y^{n-1})$.
As $R_1$ is a two-dimensional normal domain, $R_1\to k[x,y,z]$ is
an isomorphism, and hence $k[x,y,z]$ is normal.
It is easy to see that $Q(S^{G'})=k(x,y,z,uv)$ and $[k(x,y,z,uv):k(x,y,z)]
\leq 2$.
As $|G/G'|=2$, $R_1=k[x,y,z]\rightarrow R$ is finite and birational.
As $R_1$ is normal, $R_1=R$.
Thus $\hat R$ is of type $(D_n)$.

The cases of constant groups $G=(E_6),(E_7),(E_8)$ are well-known \cite{LW},
and we omit the proof.
\end{proof}

\begin{remark}
Note that the converse in the corollary is also checked theoretically.
As $G$ is linearly reductive, $R=(\Sym V)^G$ is a direct summand subring of
$S=\Sym V$, and hence is strongly $F$-regular.
Thus its completion is also strongly $F$-regular, see for example, 
\cite[(3.28)]{Hashimoto}.
Gorenstein property of $R$ is a consequence of
\cite[(32.4)]{ETI}.

Nevertheless, at this moment, the author does not know a theoretical reason
why $G$ can be recovered from 
the isomorphism class of $\hat R$
(this is true, as can be seen from the result of the classification).
\end{remark}

\begin{remark}
Let $V$ and $G$ be as above.
Set $S:=(\Sym V)^G$, and let $\hat S$ be its completion with respect 
to the irrelevant maximal ideal so that $\hat S\cong k[[x,y]]$.
As $G^\circ$ is infinitesimal, $\hat S^{G^\circ}\rightarrow \hat S$ is
purely inseparable.
So $\Spec \hat S^{G^\circ}\setminus 0$ is simply connected.
As $\Spec \hat S^{G^\circ}\setminus 0 \rightarrow \Spec \hat S^{G}\setminus 0$
is a Galois covering of the Galois group $G/G^\circ=G\red$, the
fundamental group of $\Spec \hat S^G\setminus 0$ is $G\red$, which is linearly
reductive, as stated in \cite{Artin}.
\end{remark}

\end{document}